\theoremstyle{plain}
\newtheorem{thm}{Theorem}[section]
\newtheorem{prop}[thm]{Proposition}
\theoremstyle{definition}
\newtheorem{Def}[thm]{Definition}
\numberwithin{equation}{section}
\title{Inferior Regular Partitions and Glaisher Correspondence
}
\author{Masanori Ando\ \ (Nara Gakuen University)}
\date{}
\begin{document}
\pagestyle{empty}
\maketitle\thispagestyle{empty}
\section{Partition}
Let $n$ be a positive integer. A partition $\lambda $ is an integer sequence 
\[
\lambda =(\lambda_1,\lambda_2,\ldots,\lambda_\ell) 
\]
satisfying $\lambda _1 \geq \lambda _2 \geq \ldots \geq \lambda _\ell >0$. 
We call $\ell (\lambda):= \ell$ the length of $\lambda $, 
 $\displaystyle |\lambda |:=\sum _{i=1}^{\ell }{\lambda _i}$ the size of $\lambda $, 
and each $\lambda _i$ a part of $\lambda $. 
 We let ${\mathcal {P}} $ denote the set of partitions, $\mathcal{P}(n)$ the set of partitions size $n$. 
After this, ``$(n)$'' means the restriction of size $n$. 
For a partition $\lambda$, we let $m_i(\lambda )$ denote the multiplicity of $i$ as its part. 
$(1^{m_1(\lambda )}2^{m_2(\lambda )} \ldots  )$ is another representation of $\lambda $. 
\begin{Def}
For any positive integer $r\geq 2$, we define the next subsets of $\mathcal{P}$. 
\begin{align*}
&\mathcal{RP}_r:=\{ \lambda \in \mathcal{P}\ |\ {}^\forall k, m_k(\lambda )<r\}
\textrm{: the set of $r$-regular partitions}, \\
&\mathcal{CP}_r:=\{ \lambda \in \mathcal{P}\ |\ {}^\forall k, m_{rk}(\lambda )=0\}
\textrm{: the set of $r$-class regular partitions}, \\
&\mathcal{R'P}_r:=\{ \lambda \in \mathcal{P}\ |\ {}^{\exists !} k, m_k(\lambda )\geq r\}
\textrm{: the set of $r$-inferior regular partitions}. 
\end{align*}
\end{Def}
Except for $\mathcal{R'P}_r$, 
the generating functions of those are well known. 
\[
\sum_{\lambda \in \mathcal{P}}{q^{|\lambda |}}=\frac{1}{(q;q)_\infty }, 
\sum_{\lambda \in \mathcal{RP}_r}{q^{|\lambda |}}=\frac{(q^r;q^r)_\infty}{(q;q)_\infty}, 
\sum_{\lambda \in \mathcal{CP}_r}{q^{|\lambda |}}=\frac{(q^r;q^r)_\infty}{(q;q)_\infty}. 
\]
Here $(a;b)_k=(1-a)(1-ab)\cdots (1-ab^{k-1})$. 
Especially, $\sharp \mathcal{RP}_r(n)=\sharp \mathcal{CP}_r(n)$.
There is natural bijection between these two sets. 
That is {\it Glaisher correspondence} $g_r : \mathcal{CP}_r(n)\ni \lambda \longmapsto g_r(\lambda )\in \mathcal{RP}_r(n)$. 
If $\lambda$ has more than or equal to $r$ same size parts, 
we combine $r$ inside those. In other word, we replace $k^r$ by $rk$. 
Repeat these operations until the partition has come to an element of $\mathcal{RP}_r(n)$. 
And we denote $c_r(\lambda )$ the number of operations from $\lambda $ to $g_r(\lambda )$. 
We denote $c_{r,n}:=\sum_{\lambda \in \mathcal{CP}_r(n)}{c_r(\lambda )}$. 
\\
\textbf{Example. } $r=2, \lambda =(1^6)$
\[
(1^6)\mapsto (21^4)\mapsto (2^21^2)\mapsto (2^3)\mapsto (42)=g_2(1^6). 
\]
Then, $c_2(1^6)=4$. 
\begin{prop} For any positive integer $r\geq 2$, 
\[
\sum_{\lambda \in \mathcal{CP}_r}{c_r(\lambda )q^{|\lambda |}}
=\sum_{\lambda \in \mathcal{R'P}_r}{q^{|\lambda |}}
=\frac{(q^r;q^r)_\infty}{(q;q)_\infty}\sum_{k\geq 1}{\frac{q^{rk}}{1-q^{rk}}}. 
\]
\end{prop}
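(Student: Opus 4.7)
The plan is to prove the two equalities separately. The rightmost equality is more routine, so I would handle it first by classifying each $\lambda \in \mathcal{R'P}_r$ according to the unique part size $k$ whose multiplicity is at least $r$. That part contributes $\sum_{m \geq r} q^{mk} = \frac{q^{rk}}{1-q^k}$ to the generating function, while the remaining parts form an $r$-regular partition with no part equal to $k$, contributing $\frac{(q^r;q^r)_\infty}{(q;q)_\infty} \cdot \frac{1-q^k}{1-q^{rk}}$ (i.e., the full $r$-regular generating function divided by the factor for part $k$). Summing over $k \geq 1$, the factors $1-q^k$ cancel cleanly and yield $\frac{(q^r;q^r)_\infty}{(q;q)_\infty} \sum_{k \geq 1} \frac{q^{rk}}{1-q^{rk}}$.

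For the leftmost equality, the main task is to extract a manageable formula for $c_r(\lambda)$. Because each Glaisher step removes $r$ parts and adds one, it decreases $\ell(\lambda)$ by exactly $r-1$, so $c_r(\lambda) = (\ell(\lambda) - \ell(g_r(\lambda)))/(r-1)$, independent of the choices made during the reduction. I would then establish that different $r$-free bases do not interact: by induction on the number of operations, every part appearing in any intermediate stage has the form $r^a k$ for some $a \geq 0$ and some $r \nmid k$ occurring in $\lambda$, and each step simply merges $r$ copies of $r^{a-1} k$ into one copy of $r^a k$. Thus for each $r$-free $k$ with $m_k(\lambda) = m$, the base-$r$ expansion $m = \sum_j a_j r^j$ determines the contribution of this base to $g_r(\lambda)$ (namely $a_j$ parts of size $r^j k$), and the number of operations consumed is $\sum_{j \geq 1} \lfloor m/r^j \rfloor$. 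Summing over bases gives
\[
c_r(\lambda) = \sum_{k : r \nmid k} \sum_{j \geq 1} \left\lfloor \tfrac{m_k(\lambda)}{r^j} \right\rfloor.
\]

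With this formula in hand, the remainder is direct manipulation. Writing $\lfloor m/r^j \rfloor = \sum_{i \geq 1} [m \geq ir^j]$ and swapping the order of summation,
\[
\sum_{\lambda \in \mathcal{CP}_r} c_r(\lambda) q^{|\lambda|} = \sum_{k : r \nmid k} \sum_{i,j \geq 1} \sum_{\substack{\lambda \in \mathcal{CP}_r \\ m_k(\lambda) \geq i r^j}} q^{|\lambda|}.
\]
The innermost sum factors: imposing at least $ir^j$ copies of $k$ contributes $q^{ir^j k}$, while the residual partition ranges freely over $\mathcal{CP}_r$, contributing $\frac{(q^r;q^r)_\infty}{(q;q)_\infty}$. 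Summing the geometric series over $i$ produces $\frac{q^{r^j k}}{1 - q^{r^j k}}$, and the reparametrization $K = r^j k$ — a bijection between pairs $(k,j)$ with $r \nmid k$, $j \geq 1$ and positive multiples of $r$ — collapses the remaining double sum to $\sum_{K' \geq 1} \frac{q^{rK'}}{1 - q^{rK'}}$, as required. The principal obstacle I anticipate is the base-independence step supporting the $c_r$ formula; although a short induction on the number of Glaisher operations settles it, this is where the genuine combinatorial content of the statement resides.
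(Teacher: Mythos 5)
Your proposal is correct and follows essentially the same route as the paper: the right-hand equality via isolating the unique part of multiplicity $\geq r$ so that $\frac{q^{rk}}{1-q^k}\cdot\frac{1-q^k}{1-q^{rk}}$ telescopes, and the left-hand equality via the reindexing $rk = r^jk'$ with $r\nmid k'$ together with the count $c_r(\lambda)=\sum_{r\nmid k}\sum_{j\geq 1}\lfloor m_k(\lambda)/r^j\rfloor$ of operations producing each part $r^jk$. Your write-up is in fact somewhat more explicit than the paper's (notably the order-independence of $c_r$ via the length drop of $r-1$ per step), but the underlying decompositions are identical.
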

\begin{proof}
For 
$r$-inferior regular partitions, 
\begin{eqnarray*}
&&\frac{(q^r;q^r)_\infty}{(q;q)_\infty}\cdot \frac{q^{rk}}{1-q^{rk}}\\
&=&\left( \prod_{i\not = k}\frac{1-q^{ri}}{1-q^i}\right)
\times \frac{1-q^{rk}}{1-q^k}\cdot \frac{q^{rk}}{1-q^{rk}}\\
&=&\left( \prod_{i\not = k}\frac{1-q^{ri}}{1-q^i}\right) \times (q^{rk}+q^{(r+1)k}+\cdots ). 
\end{eqnarray*}
Here, to choose a term from parentheses of right-hand side correspond to choose  more than or equal to $r$ of $k$ parts. 
Then, 
\[
\frac{(q^r;q^r)_\infty}{(q;q)_\infty}\sum_{k\geq 1}{\frac{q^{rk}}{1-q^{rk}}}
\]
is the generating function of $r$-inferior regular partitions. 
On the other hand, for the generating function of $c_{r, n}$, 
\begin{eqnarray*}
&&\frac{(q^r;q^r)_\infty}{(q;q)_\infty}\sum_{k\geq 1}\frac{q^{rk}}{1-q^{rk}}\\
&=&\frac{(q^r;q^r)_\infty}{(q;q)_\infty}\sum_{i\geq 1}\sum_{k, r \nmid k}\frac{q^{r^ik}}{1-q^{r^ik}}. \\
&=&\frac{(q^r;q^r)_\infty}{(q;q)_\infty}\sum_{i\geq 1}\sum_{k, r \nmid k}{(q^{r^ik}+q^{2r^ik}+\cdots +q^{mr^ik}+\cdots )}.
\end{eqnarray*}
Here, to choose $q^{mr^ik}$ term from parentheses correspond to 
choose a $r$-class regular partition that has more than or equal to $mr^i$ of $k$ parts. 
When we take summation by $m$, the coefficient of $n$ is the number of Glaisher operations 
that make the $r^i$ multiple part. 
And let take summation by $i$, it becomes the generating function of $c_{r,n}$. 
\end{proof}
For each Glaisher operation, the length of partition decreases just $r-1$. Then, 
\begin{prop} For any positive integer $n$, 
\[
\sum_{\lambda\in \mathcal{CP}_r(n)}{\ell (\lambda )}-\sum_{\lambda \in \mathcal{RP}_r(n)}{\ell (\lambda )}
=(r-1)c_{r,n}. 
\]
\end{prop}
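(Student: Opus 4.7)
The plan is to reduce the identity to a per-partition statement for each $\lambda \in \mathcal{CP}_r(n)$, then sum using the fact that $g_r$ is a bijection. The right-hand side already suggests the mechanism: $c_{r,n}$ counts total Glaisher operations, and each operation has a predictable effect on length.

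First I would verify the local claim quoted right before the proposition: a single Glaisher operation replaces the subpartition $k^r$ (that is, $r$ parts equal to $k$) by the single part $rk$, removing $r$ parts and inserting $1$, so the length drops by exactly $r-1$. Since $g_r(\lambda)$ is obtained from $\lambda$ by performing $c_r(\lambda)$ such operations in succession (by definition of $c_r$), iterating gives
\[
\ell(\lambda) - \ell(g_r(\lambda)) = (r-1)\, c_r(\lambda)
\]
for every $\lambda \in \mathcal{CP}_r(n)$.

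Next I would sum this identity over $\lambda \in \mathcal{CP}_r(n)$:
\[
\sum_{\lambda \in \mathcal{CP}_r(n)} \ell(\lambda) - \sum_{\lambda \in \mathcal{CP}_r(n)} \ell(g_r(\lambda)) = (r-1) \sum_{\lambda \in \mathcal{CP}_r(n)} c_r(\lambda) = (r-1)\, c_{r,n},
\]
where the last equality is the definition of $c_{r,n}$. Because $g_r : \mathcal{CP}_r(n) \to \mathcal{RP}_r(n)$ is a bijection (preserving size), reindexing the second sum by $\mu = g_r(\lambda)$ turns it into $\sum_{\mu \in \mathcal{RP}_r(n)} \ell(\mu)$, yielding the claimed formula.

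Essentially no step is hard here; the only thing worth stating carefully is the per-operation length decrement, since it is what couples the combinatorial quantity $c_{r,n}$ to the difference of average lengths. I would phrase the argument so that the bijectivity of $g_r$ and the constancy of the length drop per operation are both used explicitly, so the reader sees why the coefficient is exactly $r-1$ rather than depending on $\lambda$.
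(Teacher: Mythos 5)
Your proof is correct and is exactly the argument the paper intends: the paper states only the one-line observation that each Glaisher operation decreases the length by $r-1$ and then asserts the proposition, while you fill in the telescoping to $\ell(\lambda)-\ell(g_r(\lambda))=(r-1)c_r(\lambda)$ and the reindexing via the bijection $g_r$. No issues.
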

We refine this identity. 
\section{Mizukawa-Yamada's $X-Y=c$}
\begin{Def}
For $1\leq {}^\forall j\leq r-1, {}^\forall \lambda \in \mathcal{CP}_r, {}^\forall \mu \in \mathcal{RP}_r$, we define
\begin{eqnarray*}
x_{r,j}(\lambda )&:=\sharp \{ k\ |\ \lambda _k \equiv j (\textrm{mod}\ r)\}, &
X_{r,j,n}:= \sum_{\lambda \in \mathcal{CP}_r(n)}{x_{r,j}(\lambda )}, \\
y_{r,j}(\mu )&:=\sharp \{ k\ |\ m_k(\mu )\geq j\}, &
Y_{r,j,n}:= \sum_{\mu \in \mathcal{RP}_r(n)}{y_{r,j}(\mu )}. 
\end{eqnarray*}
\end{Def}
\begin{thm}[\cite{BOS}, \cite{MY}]
For any positive integer $r,j,n$, 
\[
X_{r,j,n}-Y_{r,j,n}=c_{r,n}. 
\]
Then the value of $X-Y$ doesn't depend on $j$. 
\end{thm}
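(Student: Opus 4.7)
I will mirror the generating-function proof of Proposition 1.2, computing $\sum_n X_{r,j,n}q^n$ and $\sum_n Y_{r,j,n}q^n$ as closed forms involving $\frac{(q^r;q^r)_\infty}{(q;q)_\infty}$, subtracting, and matching the result against the expression for $\sum_n c_{r,n}q^n$ already established.

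First, since $x_{r,j}(\lambda)=\sum_{i\equiv j\,(\mathrm{mod}\,r)} m_i(\lambda)$ and $\mathcal{CP}_r$ is generated by $\frac{(q^r;q^r)_\infty}{(q;q)_\infty}=\prod_{r\nmid i}(1-q^i)^{-1}$, the standard ``weight by multiplicity'' trick ($\sum_{m\geq 0}m\,x^m=\frac{x}{(1-x)^2}$) applied one factor at a time gives
\[
\sum_n X_{r,j,n}q^n
=\frac{(q^r;q^r)_\infty}{(q;q)_\infty}\sum_{\substack{i\geq 1\\ i\equiv j\,(\mathrm{mod}\,r)}}\frac{q^i}{1-q^i}.
\]
For $Y$, fix $k$ and count $\mu\in\mathcal{RP}_r$ with $m_k(\mu)\geq j$: such $\mu$ contribute the product $\prod_{i\neq k}\frac{1-q^{ri}}{1-q^i}$ times $q^{jk}+q^{(j+1)k}+\cdots+q^{(r-1)k}=\frac{q^{jk}-q^{rk}}{1-q^k}$. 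Summing over $k$ and restoring the missing $k$-factor into the global product yields
\[
\sum_n Y_{r,j,n}q^n
=\frac{(q^r;q^r)_\infty}{(q;q)_\infty}\sum_{k\geq 1}\frac{q^{jk}-q^{rk}}{1-q^{rk}}.
\]

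With these two formulas and the expression for $\sum_n c_{r,n}q^n$ from Proposition 1.2 in hand, the theorem reduces to proving
\[
\sum_{\substack{i\geq 1\\ i\equiv j\,(\mathrm{mod}\,r)}}\frac{q^i}{1-q^i}
-\sum_{k\geq 1}\frac{q^{jk}-q^{rk}}{1-q^{rk}}
=\sum_{k\geq 1}\frac{q^{rk}}{1-q^{rk}},
\]
which after moving the $-q^{rk}$ term to the right is equivalent to the clean identity
\[
\sum_{m\geq 0}\frac{q^{j+rm}}{1-q^{j+rm}}=\sum_{k\geq 1}\frac{q^{jk}}{1-q^{rk}}.
\]
This identity is verified by expanding both sides as double power series and swapping the order of summation: both sides equal $\sum_{k\geq 1,\,m\geq 0}q^{k(j+rm)}$. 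Remarkably, the $j$-dependence vanishes from the difference $X-Y$, which is exactly the $j$-independence asserted in the theorem.

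\textbf{Main obstacle.} The work lies in the $Y$ side: one must track which interval $m_k(\mu)$ ranges over and correctly reconcile the ``$i\ne k$'' product with the global factor $\frac{(q^r;q^r)_\infty}{(q;q)_\infty}$. The series identity that forces $j$ to cancel is elementary once spotted, but it is the true structural heart of the theorem. A more combinatorial alternative would be to match each contribution to $X_{r,j,n}-Y_{r,j,n}$ bijectively with one elementary Glaisher operation by following the algorithm through intermediate partitions, but setting up such a bijection uniformly across all $j$ appears substantially more delicate than the generating-function route above.
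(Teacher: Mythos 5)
Your proof is correct, but it takes a genuinely different route from the paper. You compute closed forms for $\sum_n X_{r,j,n}q^n$ and $\sum_n Y_{r,j,n}q^n$ (both of which check out: the weight-by-multiplicity derivation of $\frac{(q^r;q^r)_\infty}{(q;q)_\infty}\sum_{i\equiv j}\frac{q^i}{1-q^i}$ and the reconciliation of the $i\neq k$ product via $\prod_{i\neq k}\frac{1-q^{ri}}{1-q^i}=\frac{(q^r;q^r)_\infty}{(q;q)_\infty}\cdot\frac{1-q^k}{1-q^{rk}}$ are both right), and then reduce everything to the Lambert-series identity $\sum_{m\geq 0}\frac{q^{j+rm}}{1-q^{j+rm}}=\sum_{k\geq 1}\frac{q^{jk}}{1-q^{rk}}$, which indeed holds since both sides expand to $\sum_{k\geq 1,\,m\geq 0}q^{k(j+rm)}$. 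The paper instead argues bijectively: it introduces the set $A_{r,j,n}$ of triples $(\lambda;k,\ell)$ with $\sharp A_{r,j,n}=X_{r,j,n}$ and a map $\varphi_{r,j}$ into $\mathcal{RP}_r(n)\cup\mathcal{R'P}_r(n)$ built from the Glaisher correspondence, showing the fiber over a regular $\mu$ has size $y_{r,j}(\mu)$ and the fiber over an inferior regular $\mu$ has size $1$, whence $X_{r,j,n}=Y_{r,j,n}+\sharp\mathcal{R'P}_r(n)=Y_{r,j,n}+c_{r,n}$ by Proposition 1.2. Your route is more elementary and makes the $j$-cancellation transparent as a summation interchange, whereas the paper's fiber-counting argument explains combinatorially \emph{why} the difference is counted by inferior regular partitions and, importantly, is the template that survives the generalization to tuples $\underline{r}$ in Section 3, where the analogous generating-function manipulation would be considerably messier. (The combinatorial alternative you flag as "substantially more delicate" is in fact exactly what the paper does.)
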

\noindent
\textbf{Remark. }\ 
This is the refinement of \textbf{proposition 1.3.}
because
\[
\sum_{j=1}^{r-1}X_{r,j,n}=\sum_{\lambda \in \mathcal{CP}_r(n)}{\ell (\lambda)}, 
\sum_{j=1}^{r-1}Y_{r,j,n}=\sum_{\lambda \in \mathcal{RP}_r(n)}{\ell (\lambda)}. 
\]
\\
\textbf{Example. }For $r=3, n=7$, 
\[
\mathcal{CP}_3(7)
=\{ (7), (52), (51^2), (421), (41^3), (2^31), (2^21^3), (21^5), (1^7)\}. 
\]
In the whole set, the number of part $7$ is $1$. 
Similarly, the number of part $5$ is $2$, part $4$ is $2$, part $2$ is $8$ and part $1$ is $22$. 
Then, 
\[
X_{3,1,7}=1+2+22=25, X_{3,2,7}=2+8=10. 
\]
On the other hand, 
\[
\mathcal{RP}_3(7)
=\{ (7), (61), (52), (51^2), (43), (421), (3^21), (32^2), (321^2)\}. 
\]
Then, 
\[
Y_{3,1,7}=19, Y_{3,2,7}=4. 
\]
The differences between $X$ and $Y$ are both $6$. 
\begin{proof}
We define
\[
A_{r,j,n}
:=\{ (\lambda ;k,\ell )\ |\ \lambda \in \mathcal{CP}_r(n), k\equiv j(\textrm{mod}\ r), 1\leq \ell \leq m_k(\lambda )\}. 
\]
From definition, $\sharp A_{r,j,n}=X_{r,j,n}$. 
\[
\begin{array}{cccc}
\varphi_{r,j} : &A_{r,j,n} &\longrightarrow &\mathcal{P}(n)\\
& \rotatebox{90}{$\in$} & & \rotatebox{90}{$\in$} \\
&(\lambda ;k,\ell )&\longmapsto &\mu 
\end{array}. 
\]
Here, $\mu =g_{r}(\lambda \setminus (k^\ell))\cup (\ell ^k)$. 
And $\cup, \setminus$ are sum and difference when we identify partition with multi-set. 
For $\mu$, the kind of part which have more than or equal to $r$ same part is at most one. 
Then, 
\[
\varphi_{r,j}(A_{r,j,n})\subset \mathcal{RP}_r(n)\cup \mathcal{R'P}_r(n). 
\]
Let find multiplicity of each image. \\
\underline{For $\mu \in \mathcal{RP}_r(n)$ }\\
We can make inverse image based on $\ell$ that $m_\ell(\mu)\geq j$. 
Then, 
\[
\sharp \varphi_{r,j}^{-1}(\mu)=y_{r,j}(\mu).
\] 
\underline{For $\mu \in \mathcal{R'P}_r(n)$ }\\
It is necessary to make it $r$-regular as preparations for inverse operation of Glaisher. 
For this, we decrease the part $\ell $ that $m_\ell(\mu)\geq r$. 
Because definition of $\mathcal{R'P}$, such $\ell$ decide unique. 
And it is unique too that $k$ which,  
\[
\mu \setminus (\ell )^k\in \mathcal{RP}_r(n), k \equiv j\ (\textrm{mod}\ r). 
\]
Then, $\sharp \varphi_{r,j}^{-1}(\mu)=1$. \\
\indent
Therefore $X_{r,j,n}=Y_{r,j,n}+\sharp \mathcal{R'P}_r(n)$, $X-Y$ does not depend on $j$. 
\end{proof}
\section{Version for $\underline{r}$ . }
\begin{Def}
We think $\underline{r}=(r_1, r_2, \ldots , r_m)$ a $m$-tuple of positive integers greater than $1$. 
We assume that each elements in $\underline{r}$ are relatively prime. 
We define
\[
\mathcal{CP}_{\underline{r}}:=
\{ \lambda \in \mathcal{P}\ |\ {}^\forall j,k, m_{r_jk}=0\}. 
\]
We call $\mathcal{CP}_{\underline{r}}$ the set of $\underline{r}$-class regular partitions. 
And we put $\underline{s}$ that tuple $(r_2, \ldots , r_m)$. 
We define
\[
\mathcal{RP}_{\underline{r}}:=\mathcal{RP}_{r_1}\cap \mathcal{CP}_{\underline{s}}, 
\mathcal{R'P}_{\underline{r}}:=\mathcal{R'P}_{r_1}\cap \mathcal{CP}_{\underline{s}}. 
\]
We call these the set of $\underline{r}$-regular partitions and $\underline{r}$-inferior regular partitions. 
\end{Def}
When following Mizukawa-Yamada\cite{MY}, 
the inclusion-exclusion principle gives us the generating functions of 
$\mathcal{CP}_{\underline{r}}$ and $\mathcal{RP}_{\underline{r}}$. 
\begin{prop}For any $m$-tuple $\underline{r}$, 
\[
\sum_{\lambda \in \mathcal{CP}_{\underline{r}}}q^{|\lambda |}
=\sum_{\lambda \in \mathcal{RP}_{\underline{r}}}q^{|\lambda |}
=\prod_{A\subset \underline{r}}(q^{\Pi A};q^{\Pi A})_\infty ^{(-1)^{|A|+1}}. 
\]
Here we identify $\underline{r}$ the set $\{ r_1, r_2, \ldots , r_m\}$. 
And $\Pi A:=\prod_{r\in A}r$. 
\footnote
{
When $A=\emptyset$, $\Pi A=1$. 
}
\end{prop}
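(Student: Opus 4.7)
The plan is to obtain both generating functions from one inclusion--exclusion argument applied at the level of Euler products. The coprimality hypothesis enters in the following form: a positive integer $i$ is divisible by $r_{j_1},\ldots,r_{j_k}$ simultaneously iff $i$ is divisible by the product $r_{j_1}\cdots r_{j_k}$. Setting $T_{\underline r}:=\{i\geq 1 : r_j\nmid i \text{ for every } j\}$, one has the indicator identity
\[
\mathbf{1}[i\in T_{\underline r}] \;=\; \sum_{A\subset \underline r}(-1)^{|A|}\,\mathbf{1}[\Pi A\mid i],
\]
which will be the engine of everything that follows.

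For $\mathcal{CP}_{\underline r}$, the generating function is simply $\prod_{i\in T_{\underline r}}(1-q^i)^{-1}$ since $\underline r$-class regular partitions are exactly those with all parts in $T_{\underline r}$. I would take $-\log$ of this product, insert the indicator identity, swap the resulting double sum, and recognize each inner sum $\sum_{k\geq 1}-\log(1-q^{(\Pi A)k})$ as $-\log(q^{\Pi A};q^{\Pi A})_\infty$. Re-exponentiating yields the target product $\prod_{A\subset\underline r}(q^{\Pi A};q^{\Pi A})_\infty^{(-1)^{|A|+1}}$ (the empty set $A=\emptyset$ contributes the ``trivial'' factor $(q;q)_\infty^{-1}$, which is consistent with the footnote convention $\Pi\emptyset=1$).

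For $\mathcal{RP}_{\underline r}=\mathcal{RP}_{r_1}\cap \mathcal{CP}_{\underline s}$, the generating function factors as
\[
\sum_{\lambda\in\mathcal{RP}_{\underline r}}q^{|\lambda|}
\;=\;\prod_{i\in T_{\underline s}}(1+q^i+\cdots+q^{(r_1-1)i})
\;=\;\prod_{i\in T_{\underline s}}\frac{1-q^{r_1 i}}{1-q^i},
\]
where $\underline s=(r_2,\ldots,r_m)$. The denominator is the generating function of $\mathcal{CP}_{\underline s}$, which by the previous paragraph equals $\prod_{A\subset\underline s}(q^{\Pi A};q^{\Pi A})_\infty^{(-1)^{|A|+1}}$ and accounts for exactly those subsets $A\subset\underline r$ with $r_1\notin A$. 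For the numerator I would run the same log-inclusion-exclusion procedure on $\prod_{i\in T_{\underline s}}(1-q^{r_1 i})$; the result is $\prod_{B\subset\underline s}(q^{r_1\Pi B};q^{r_1\Pi B})_\infty^{(-1)^{|B|}}$, which, via the bijection $B\mapsto A=B\cup\{r_1\}$, accounts for exactly those $A\subset\underline r$ with $r_1\in A$ (note $(-1)^{|B|}=(-1)^{|A|+1}$, so the signs match). Multiplying the two contributions therefore recovers the full product over $A\subset\underline r$.

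The genuine work is a single computation, repeated twice: converting $\prod_{i\in T_{\underline r}}$ or $\prod_{i\in T_{\underline s}}$ into a product indexed by subsets of $\underline r$ or $\underline s$. The main thing to watch is the parity bookkeeping — correctly tracking the $-\log$ sign, the inclusion--exclusion sign $(-1)^{|A|}$, and the $-\log(q^{m};q^{m})_\infty$ sign — and checking that the two pieces in the second half precisely partition the subsets of $\underline r$ according to whether $r_1\in A$. Once these are in hand, the equality of the two generating functions and of each with the product is immediate, and (as a corollary) one recovers $\sharp\mathcal{CP}_{\underline r}(n)=\sharp\mathcal{RP}_{\underline r}(n)$, suggesting an extension of Glaisher's bijection to this setting.
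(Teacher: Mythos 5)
Your argument is correct, and it differs from the paper in an instructive way on the second equality. For the generating function of $\mathcal{CP}_{\underline{r}}$ you do exactly what the paper does: inclusion--exclusion over subsets $A\subset\underline{r}$, with pairwise coprimality converting simultaneous divisibility by the elements of $A$ into divisibility by $\Pi A$. For $\mathcal{RP}_{\underline{r}}$, however, the paper explicitly declines to compute directly --- it remarks that inclusion--exclusion gives only the $\mathcal{CP}_{\underline{r}}$ side ``correctly'' --- and instead observes that the Glaisher map $g_{r_1}$ restricts to a bijection $\mathcal{CP}_{\underline{r}}(n)\to\mathcal{RP}_{\underline{r}}(n)$ (replacing $k^{r_1}$ by $r_1k$ neither creates nor destroys divisibility by $r_2,\dots,r_m$, since $r_1$ is coprime to them), so the two generating functions coincide. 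You instead factor $\sum_{\mu\in\mathcal{RP}_{\underline{r}}}q^{|\mu|}=\prod_{i\in T_{\underline{s}}}(1-q^{r_1i})/(1-q^i)$ and run the logarithmic inclusion--exclusion separately on numerator and denominator, matching these against the subsets $A$ with $r_1\in A$ and $r_1\notin A$ respectively; your sign check $(-1)^{|B|}=(-1)^{|A|+1}$ for $A=B\cup\{r_1\}$ is right, and the step needs coprimality only within $\underline{s}$. Your route is self-contained and proves the product formula for $\mathcal{RP}_{\underline{r}}$ by pure generating-function manipulation, whereas the paper's route is shorter given that the Glaisher bijection is the central object of the rest of the paper and is needed anyway for the later results; either way one recovers $\sharp\mathcal{CP}_{\underline{r}}(n)=\sharp\mathcal{RP}_{\underline{r}}(n)$.
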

The inclusion-exclusion principle gives us only the generating function of 
$\mathcal{CP}_{\underline{r}}$ correctly. 
However, 
the $r_1$-Glaisher correspondence is 
also bijection between $\mathcal{CP}_{\underline{r}}(n)$ and $\mathcal{RP}_{\underline{r}}(n)$. 
Then the generating functions of both sets are equal. 
We define the number of $r_1$-Glaisher operations over $\mathcal{CP}_{\underline{r}}(n)$,  
$c_{\underline{r},n}:=\sum_{\lambda \in \mathcal{CP}_{\underline{r}}(n)}{c_{r_1}}$. 
\begin{prop}For any $m$-tuple $\underline{r}$, 
\[
\sum_{\lambda \in \mathcal{CP}_{\underline{r}}(n)}{c_{r_1}(\lambda )q^{|\lambda |}}
=\sum_{\lambda \in \mathcal{R'P}_{\underline{r}}(n)}q^{|\lambda |}. 
\]
\end{prop}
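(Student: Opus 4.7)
The plan is to mirror the proof of the single-$r$ case (Proposition 1.2), adapting its two main identities to the $\underline r$ setting. Since an element of $\mathcal{R'P}_{\underline r}$ is a partition whose parts avoid multiples of $r_2,\ldots,r_m$ and in which exactly one value $k$ appears with multiplicity $\geq r_1$, I would decompose by the distinguished $k$: the $k$-th factor contributes $q^{r_1 k}/(1-q^k)$, while every other admissible part $k'$ contributes $(1-q^{r_1 k'})/(1-q^{k'})$. Pulling out the full product over admissible parts and dividing by the $k$-th entry yields
\[
\sum_{\lambda\in\mathcal{R'P}_{\underline r}}q^{|\lambda|}
=\left(\prod_{k:\, r_j\nmid k,\,j\geq 2}\frac{1-q^{r_1 k}}{1-q^k}\right)
\sum_{k:\, r_j\nmid k,\,j\geq 2}\frac{q^{r_1 k}}{1-q^{r_1 k}},
\]
and the prefactor coincides with the generating function of $\mathcal{RP}_{\underline r}$, hence with that of $\mathcal{CP}_{\underline r}$ by the previous proposition.

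Next I would apply the power-of-$r_1$ decomposition used in Proposition 1.2: writing $k=r_1^{i-1}k'$ with $r_1\nmid k'$ and using $\gcd(r_1,r_j)=1$ to equate ``$r_j\nmid k$ for $j\geq 2$'' with ``$r_j\nmid k'$ for $j\geq 2$'', the trailing sum rewrites as
\[
\sum_{k:\, r_j\nmid k,\,j\geq 2}\frac{q^{r_1 k}}{1-q^{r_1 k}}
=\sum_{i\geq 1}\sum_{k:\,r_j\nmid k,\,\forall j}\sum_{m\geq 1}q^{mr_1^i k}.
\]
Multiplying each monomial $q^{mr_1^i k}$ by the $\mathcal{CP}_{\underline r}$-generating function counts $\underline r$-class regular partitions with at least $m r_1^i$ copies of the admissible base part $k$; summing on $m$ converts this to $\sum_{\lambda}\lfloor m_k(\lambda)/r_1^i\rfloor$, which records the number of $r_1$-Glaisher operations on $\lambda$ that produce a new part equal to $r_1^i k$. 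Summing on $i$ and on $k$ then yields $\sum_{\lambda\in\mathcal{CP}_{\underline r}}c_{r_1}(\lambda)q^{|\lambda|}$, as required.

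The main obstacle is verifying that the Proposition 1.2 combinatorial interpretation survives in the $\underline r$ setting: namely, that for $\lambda\in\mathcal{CP}_{\underline r}$ each $r_1$-Glaisher operation is accounted for by exactly one pair $(i,k)$ with $k$ admissible. This hinges on the coprimality assumption $\gcd(r_1,r_j)=1$, which ensures that every intermediate part $r_1^i k$ produced during the reduction still satisfies $r_j\nmid r_1^i k$ for $j\geq 2$, so the disjoint towers $\{k,r_1 k,r_1^2 k,\ldots\}$ indexed by admissible bases cover every part appearing during the Glaisher reduction without collision.
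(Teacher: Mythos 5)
Your proposal is correct and follows essentially the same route as the paper: both arguments show that the two sides equal the common intermediate series $\bigl(\sum_{\mu\in\mathcal{RP}_{\underline{r}}}q^{|\mu|}\bigr)\sum_{k}\frac{q^{r_1k}}{1-q^{r_1k}}$ (sum over $k$ with $r_j\nmid k$ for $j\geq 2$), reading it once as ``one admissible part forced to multiplicity $\geq r_1$'' and once, after the $k=r_1^{i-1}k'$ decomposition, as the count of Glaisher operations. The only cosmetic difference is that the paper reaches the sum over admissible $k$ via inclusion--exclusion over subsets $A\ni r_1$, whereas you write the product and sum over admissible parts directly.
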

\begin{proof}
We proof that both sides are equal to the next generating function. 
\[
\left( \prod_{A\subset \underline{r}}(q^{\Pi A};q^{\Pi A})_\infty^{(-1)^{|A|+1}}\right) 
\sum_{r_1\in A\subset \underline{r}}\sum_{k\geq 1}\frac{(-1)^{|A|+1}q^{k\Pi A}}{1-q^{k\Pi A}}. 
\]
From \textbf{proposition 3.2}, $\left( \prod_{A\subset \underline{r}}(q^{\Pi A};q^{\Pi A})_\infty^{(-1)^{|A|+1}}\right) $ 
is the generating function of $\mathcal{RP}_{\underline{r}}$. 
From the inclusion-exclusion principle, 
\[
\sum_{r_1\in A\subset \underline{r}}\sum_{k\geq 1}\frac{(-1)^{|A|+1}q^{k\Pi A}}{1-q^{k\Pi A}}
=\sum_{k\geq1, r_2, r_3, \ldots r_m \nmid k}{\frac{q^{r_1k}}{1-q^{r_1k}}}. 
\]
Similarly for the proof of proposition 1.2, to multiply the generation function of $\mathcal{RP}_{\underline{r}}$ by this $q$-series 
is correspond to increase the number of only one kind of part to $r_1$ and over. 
\end{proof}
\begin{Def}
For $\underline{r}, 1\leq j\leq r_1, n$, we define
\[
X_{\underline{r},j,n}
:=\sum_{\lambda \in \mathcal{CP}_{\underline{r}}(n)}{x_{r_1,j}(\lambda )}, 
Y_{\underline {r},j,n}
:=\sum_{\mu \in \mathcal{RP}_{\underline{r}}(n)}{y_{r_1,j}(\mu )}. 
\]
\end{Def}
Here, $X-Y$ may not be independence from $j$. \\
\textbf{Example. }$\underline{r}=(3,5), n=5$
\[
\mathcal{CP}_{(3,5)}(5)=\{ (41), (2^21), (21^3), (1^5)\}. 
\]
Then, $X_{(3,5), 1,5}=11, X_{(3,5), 2, 5}=3$. 
On the other hand, 
\[
\mathcal{RP}_{(3,5)}(5)=\{ (41), (32), (31^2), (2^21)\}. 
\]
Then, $Y_{(3,5), 1, 5}=8, Y_{(3,5), 2, 5}=2$. 
Therefore, when $j$ is different, $X-Y$ is also different. 
\begin{thm}
For any tuple $\underline{r}=(r_1, r_2, \ldots , r_m), r_2, r_3, \ldots$ , $r_m\equiv 1({\rm{mod}}\ r_1)$, 
$1\leq j\leq r_1-1$ and positive integer $n$, 
\[
X_{\underline{r}, j,n}-Y_{\underline{r}, j,n}=c_{\underline{r}, n}. 
\]
\end{thm}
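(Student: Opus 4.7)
The plan is to prove the theorem by computing generating functions and reducing to a divisor-count identity, extending the style of Propositions~1.2 and 3.3. A direct analogue of the bijection in Theorem~2.3 is problematic, since the naive map $(\lambda;k,\ell)\mapsto g_{r_1}(\lambda\setminus(k^\ell))\cup(\ell^k)$ may produce a partition with parts divisible by some $r_i$ ($i\geq 2$), landing outside $\mathcal{RP}_{\underline{r}}\cup\mathcal{R'P}_{\underline{r}}$. I would instead compute $\sum_n X_{\underline{r},j,n}q^n$ and $\sum_n Y_{\underline{r},j,n}q^n$ by the standard ``marked part'' device. Writing $h(q)$ for the common generating function of $\mathcal{CP}_{\underline{r}}$ and $\mathcal{RP}_{\underline{r}}$ from Proposition~3.2,
\[
\sum_n X_{\underline{r},j,n}\, q^n \;=\; h(q) \sum_{\substack{k\geq 1,\ r_i\nmid k\ \forall i\\ k\equiv j\,(\mathrm{mod}\,r_1)}} \frac{q^k}{1-q^k},
\]
\[
\sum_n Y_{\underline{r},j,n}\, q^n \;=\; h(q) \sum_{\substack{k\geq 1\\ r_i\nmid k\ \forall i\geq 2}} \frac{q^{jk}-q^{r_1 k}}{1-q^{r_1 k}}.
\]
Using Proposition~3.3's formula for $\sum_n c_{\underline{r},n}q^n$, the theorem reduces to the formal identity
\[
\sum_{\substack{k\geq 1,\ r_i\nmid k\ \forall i\\ k\equiv j\,(\mathrm{mod}\,r_1)}} \frac{q^k}{1-q^k}
\;=\;
\sum_{\substack{k\geq 1\\ r_i\nmid k\ \forall i\geq 2}} \frac{q^{jk}}{1-q^{r_1 k}}.
\]

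Next I compare coefficients of $q^n$. The left side counts divisors $k\mid n$ with $r_i\nmid k$ for all $i\geq 2$ and $k\equiv j\pmod{r_1}$ (the condition $r_1\nmid k$ is automatic since $j\neq 0$), while the right side counts divisors $k\mid n$ with $r_i\nmid k$ for $i\geq 2$ and $n/k\equiv j\pmod{r_1}$. Set $T_n:=\{k\mid n:r_i\nmid k\ \forall i\geq 2\}$. I would build an involution $\sigma\colon T_n\to T_n$ with $\sigma(k)\equiv n/k\pmod{r_1}$; then $\sigma$ pairs residue-$j$ elements of $T_n$ bijectively with those whose co-divisor has residue $j$, giving the desired equality. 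Define $M_k:=\prod_{i\geq 2}r_i^{v_{r_i}(n/k)}$, where $v_{r_i}(m)$ denotes the largest $a$ with $r_i^a\mid m$, and set $\sigma(k):=(n/k)/M_k$. Then $\sigma(k)\mid n$, and $\sigma(k)\in T_n$ since by construction all $r_i$-factors ($i\geq 2$) have been divided out. The hypothesis $r_i\equiv 1\pmod{r_1}$ yields $M_k\equiv 1\pmod{r_1}$, whence $\sigma(k)\equiv n/k\pmod{r_1}$.

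The main obstacle is verifying $\sigma^2=\mathrm{id}$, which is subtle when the $r_i$ are composite since $v_{r_i}$ is then not an additive valuation. The crux is the identity $v_{r_i}(k\cdot M_k)=v_{r_i}(n/k)$ for each $i\geq 2$ and each $k\in T_n$. This follows from two elementary observations: a factor coprime to $r_i$ contributes nothing to $v_{r_i}$ (which uses the pairwise coprimality of the $r_j$'s, so the $j\neq i$ factors of $M_k$ are coprime to $r_i$), and $v_{r_i}(k\cdot r_i^a)=a$ whenever $r_i\nmid k$. Granted this, $\prod_{i\geq 2}r_i^{v_{r_i}(kM_k)}=M_k$, hence $\sigma(\sigma(k))=(kM_k)/M_k=k$, and the involution argument concludes the proof.
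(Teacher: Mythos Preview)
Your proof is correct and takes a genuinely different route from the paper. You chose generating functions because the naive bijection $(\lambda;k,\ell)\mapsto g_{r_1}(\lambda\setminus(k^\ell))\cup(\ell^k)$ can land outside $\mathcal{RP}_{\underline r}\cup\mathcal{R'P}_{\underline r}$; the paper instead \emph{repairs} that bijection. It factors $\ell=(\ell)_{\underline s}\cdot(\ell)_{\underline s'}$ into its $\underline s$-part and $\underline s$-prime part and sends $(\lambda;k,\ell)$ to $g_{r_1}(\lambda\setminus(k^\ell))\cup\bigl((\ell)_{\underline s'}^{\,k(\ell)_{\underline s}}\bigr)$, which forces the new block to have part size coprime to every $r_i$ ($i\ge 2$) and hence stays in $\mathcal{RP}_{\underline r}(n)\cup\mathcal{R'P}_{\underline r}(n)$; the hypothesis $r_i\equiv 1\pmod{r_1}$ is used to guarantee that the residue class modulo $r_1$ is preserved under this rearrangement, so the fiber count still matches $y_{r_1,j}$. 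Your divisor involution $\sigma(k)=(n/k)/M_k$ is essentially the shadow of this same idea at the level of coefficients, since $M_k$ is exactly the $\underline s$-part of $n/k$. What you gain is that the role of the congruence hypothesis becomes completely transparent (it enters only through $M_k\equiv 1\pmod{r_1}$), and you avoid verifying well-definedness and fiber sizes of a partition map; what the paper's approach gains is an explicit combinatorial correspondence at the level of individual partitions rather than an identity of power series.
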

\begin{proof}
Similarly the proof of last section, we define
\[
A_{\underline{r}, j, n}:=
\{ (\lambda ;k,\ell )\ |\ \lambda \in \mathcal{CP}_{\underline{r}}(n), k\equiv j(\textrm{mod}\ r_1), 1\leq \ell \leq m_k(\lambda )\}. 
\]
From definition, $\sharp A_{\underline{r},j,n}=X_{\underline{r},j,n}$. 
We construct a map 
\[
\begin{array}{cccc}
\varphi_{\underline{r},j} : &A_{\underline{r},j,n} &\longrightarrow &\mathcal{P}(n)\\
& \rotatebox{90}{$\in$} & & \rotatebox{90}{$\in$} \\
&(\lambda ;k,\ell )&\longmapsto &\mu 
\end{array}. 
\]
Here, $\mu = g_{r_1}(\lambda \setminus (k^\ell ))\cup ((\ell)_{\underline{s}'}^{k(\ell )_{\underline{s}}})$. 
$(\ell )_{\underline{s}}$ and $(\ell )_{\underline{s}'}$ are $\underline {s}$-part and $\underline{s}$-prime part of $\ell$. 
That is $\ell =(\ell )_{\underline{s}}\cdot (\ell )_{\underline{s}'}$ and 
$(\ell )_{\underline{s}}=r_2^{a_2}r_3^{a_3}\cdots r_m^{a_m}$, $r_2, r_3, \ldots , r_m \nmid (\ell )_{\underline{s}'}$. 
These are well-defined because $r_2, r_3, \ldots , r_m$ are relatively prime. 
And from the definition of map, each image is $\underline{s}$-class regular 
and the kind of part which have more than or equal to $r_1$ same part is at most one. 
Then $\varphi_{\underline{r},j}(A_{\underline{r}, j, n})\subset 
\mathcal{RP}_{\underline{r}}(n)\cup \mathcal{R'P}_{\underline{r}}(n)$. 
Let find multiplicity of image. 
When fix $\ell$ that $m_\ell(\mu) \geq j$. 
There is unique $k$ that 
the partition $g_{r_1}^{-1}(\mu\setminus (\ell)^k)\cup (k)_{\underline{s'}}^{\ell (k)_{\underline{s}}}$ 
is the element of $\varphi_{\underline{r}, j}^{-1}(\mu)$. 
Here because of $r_2, r_3, \ldots r_m \equiv 1 (\textrm{mod}\ r_1)$, $k\equiv (k)_{\underline{s'}}\ (\textrm{mod}\ r_1)$. \\
\underline{For $\mu \in \mathcal{RP}_{\underline{r}}(n)$ }\\
We can make inverse image based on $\ell$ such that $m_\ell(\mu)\geq j$. 
Then, $\sharp \varphi_{\underline{r},j}^{-1}(\mu)=y_{\underline{r},j}(\mu)$. \\
\underline{For $\mu \in \mathcal{R'P}_{\underline{r}}(n)$ }\\
It is necessary to make it $\underline{r}$-regular as preparations for inverse of Glaisher. 
For this, we decrease the part $\ell $ that $m_\ell(\mu)\geq r$. 
Because definition of $\mathcal{R'P}$, such $\ell$ decide unique. 
And it is unique too that $k$ which,  
\[
\mu \setminus (\ell )^k\in \mathcal{RP}_{\underline{r}}(n), k \equiv j\ (\textrm{mod}\ r_1). 
\]
Then, $\sharp \varphi_{\underline{r},j}^{-1}(\mu)=1$. \\
\end{proof}

\end{document}